\newif\ifAMS
\AMStrue\usepackage{amssymb}}
\theoremstyle{plain}
\newtheorem{Thm}{Theorem}[section]
\newtheorem{Lem}[Thm]{Lemma}
\newtheorem{Prop}[Thm]{Proposition}
\theoremstyle{definition}
\newtheorem{Def}{Definition}
\newtheorem{Cl}{Claim}
\theoremstyle{remark}
\newtheorem{Rem}{Remark}
\newtheorem{Qu}[Thm]{Question}
\DeclareMathOperator{\area}{area}
\DeclareMathOperator{\length}{length}
\newcommand{\interior}{^{ \kern-5pt ^\circ}}
\newcommand {\bd}{\partial}
\begin{document}
\title
{A surface with discontinuous isoperimetric profile and expander manifolds}

\author
{Panos Papasoglu, Eric Swenson }

\subjclass{}

\address  [Panos Papasoglu]
{Mathematical Institute, University of Oxford, Andrew Wiles Building, Woodstock Rd, Oxford OX2 6GG, U.K.  }
\email {} \email [Panos Papasoglu]{papazoglou@maths.ox.ac.uk}

\address
[Eric Swenson] {Mathematics Department, Brigham Young University,
Provo UT 84602}
\email [Eric Swenson]{eric@math.byu.edu}

\subjclass{53C20, 49Q20, 05C40}

\begin{abstract} 
We construct sequences of `expander manifolds' and we use them
to show that there is a complete connected 2-dimensional Riemannian manifold with discontinuous isoperimetric profile,
answering a question of Nardulli and Pansu. Using expander manifolds in dimension 3 we show that for any $\epsilon , M>0$ there is a Riemannian 3-sphere $S$ of volume 1, such that any 
(not necessarily connected) surface
separating $S$ in two regions of volume greater than $\epsilon $, has area greater than $M$. 


\end{abstract}
\maketitle
\section{Introduction}

Expander graphs have been studied widely and they have important applications both in pure mathematics and in computer science. 
One may construct expander-like Riemannian manifolds with interesting properties (see e.g. \cite{Bu}, \cite{Br}, \cite{BI}). We will treat
two instances of such manifolds in this paper. We think it might be useful to give a formal definition of `expander manifolds':

\begin{Def} Let $M_n^k$ be a sequence of closed $k$-dimensional Riemannian manifolds all of which have volume 1. We say that
it is an expander sequence (or an expander manifold) if for any $n$ any smooth hypersurface $S_n^{k-1}$ separating $M_n^k$ in two open sets both
of which have volume at least $1/n$, has $(k-1)$-volume at least equal to $n$.
\end{Def}

Of course there are variations of this concept: e.g. one could allow manifolds with boundary or require that all $M_n^k$ are homeomorphic to a given manifold or share
a common geometric property.

If $(M^n,g)$ is a Riemannian manifold  the
isoperimetric profile function of $M^n$ is a function $I_M:(0,vol (M)) \to \Bbb R ^+$ defined by:
$$ I_M(t)=\underset {\Omega} { \inf } \{vol_{n-1}(\bd \Omega):
\Omega\subset M^n,\, vol_n(\Omega)=t \}$$ where $\Omega $ ranges
over all open sets of $M^n$ with smooth boundary. We note that we don't require $\Omega $ to be connected.

It is easy to see that $I_M(t)$ is upper semi-continuous by adding or taking away a small ball with
smooth boundary from a given region. More precisely to show upper continuity it suffices to
show that for any $t_0\in (0,vol (M))$ and $\delta >0$, there is an $\epsilon >0$ such that
$I_M(t)< I_M(t_0)+\delta $ if $|t-t_0|<\epsilon $. Given some open set $\Omega $ with $vol_n(\Omega)=t_0$ and
$vol_{n-1}(\bd \Omega)<I_M(t_0)+\dfrac{\delta}{2}$ we may pick $\epsilon $ small enough such that 
for any $\epsilon '<\epsilon $ there are 2 small open balls with smooth boundary $B_1,B_2$ with $B_1\subseteq \Omega $ and
$B_2$ contained in the complement of $\Omega $ with $vol_n(B_1)=vol_n(B_2)=\epsilon '$ and $vol_{n-1}(\bd B_1), vol_{n-1}(\bd B_2)<\delta/4$.
Then by considering the open sets $\Omega \setminus B_1$,  $\Omega \cup B_2$ we see that $I_M(t)< I_M(t_0)+\delta $ if $|t-t_0|=\epsilon '$.

The continuity of the isoperimetric profile function is established in a number of cases: Gallot \cite{Ga} showed that it is continuous
for compact manifolds and Nardulli-Russo \cite{NR} showed that this holds also for manifolds of finite volume. Ritor\'e \cite{Ri} showed
that $I_M$ is continuous for Hadamard manifolds and complete non-compact manifolds of strictly positive sectional curvature. We refer to
\cite{Ri} and \cite{NP} for a more complete exposition of the cases where the continuity of $I_M$ is established and related questions.

Adams, Morgan and Nardulli observed that $I_M(t)$ is not necessarily continuous for manifolds with density (see Frank Morgan's blog, \cite{AMN}).

Nardulli and Pansu \cite{NP} constructed an example of a Riemannian 3-manifold with discontinuous $I_M(t)$ and asked whether there is such an example in dimension 2.

It turns out that all these constructions are based on the existence of expander manifolds, so we answer this question in section 2 of this paper by constructing an appropriate
sequence of `expander surfaces' using expander graphs.  We note that P. Buser \cite{Bu} has used in the past expander graphs 
as `blueprints' for constructing surfaces. Psaltis \cite{Ps} used expander graphs to show that there are surfaces $M$ with no restrictions
on the growth rate of $I_M$.

The first author asked in \cite {Pa2} whether there are expander manifolds $M_n^3$ where each $M_n^3$ is homeomorphic to the 3-sphere (or to the 3-ball). It turns out that
such constructions were already given by Burago-Ivanov in \cite{BI}.

Glynn-Adey and Zhu show in \cite{GAZ} that for any $\epsilon >0, M>0$ there is a Riemannian 3-ball $B$ of volume 1
such that any smooth disk separating $B$ in two regions of volume greater than $\epsilon $ has area greater than $M$. We prove the
same result here both for the 3-ball and the 3-sphere for separations by arbitrary surfaces and not just disks. Glynn-Adey and Zhu assume further that the ball $B$ has bounded diameter and boundary surface area but these are properties that are easy to arrange in general modifying slightly the ball $B$. Our construction is different from
the one in \cite{BI}, for example our sequence of manifolds converges to a point.

These results contrast with the situation in dimension 2.
Liokumovich, Nabutovsky and Rotman showed in \cite{LNR} that if $D$ is a Riemannian 2-disc there is a simple
arc of length less than $2\sqrt 3 \sqrt {\rm{area}( D)}+\delta $ which cuts the disc into two regions of area greater than $\rm{area}( D)/4 -\delta $ where
$\delta $ is any positive real. A similar result was shown in \cite{Pa} for the sphere. The results in \cite{LNR} were prompted by a question
of Gromov \cite{Gr} and Frankel-Katz \cite{FK} concerning bounding the length of contracting homotopies of a 2-disk. 

Balacheff-Sabourau \cite{BS} showed that there is some $c>0$ such that any Riemannian surface $M$ of genus $g$ can be separated in two domains of equal area by a 1-cycle of length smaller than $c\sqrt {g+1}\sqrt {\rm{area}( M)}$. Liokumovich \cite{Li} on the other hand showed that given $C>0$ and a closed surface $M$ there is a Riemannian metric of diameter 1 on $M$ such that any 1-cycle splitting it into two regions of equal area has length greater than $C$. 

We thank the referee for a careful reading of the paper and several suggestions that improved the exposition.

\section {Surfaces with discontinuous profile}

We follow the same method as \cite{AMN}, \cite{NP}. Given some constants $a,b>0$  it is enough to find a sequence of closed surfaces $S_n$ with $\area (S_n)=a+\tau _n$ where $\tau _n\to 0$
such that $I_{S_n}(a)=I_{S_n}(\tau _n) > b$. Indeed we may join the surfaces $S_n$ by tubes of negligible area to obtain a surface $S$ such that
$I_S(a+\tau _n)\to 0$ but $I_S(a)> b$.

We explain now how to construct $S_n$.

Our construction relies on the existence of expander graphs. We recall the definition of expanders.
Let $\Gamma=(V,E)$ be a graph.
For $S,T\subseteq{V}$ denote the set of all edges between $S$ and $T$ by $$E(S,T)=\{(u,v):u\in{S},v\in{T},(u,v)\in{E}\}.$$
Recall that a graph is called $k$-regular if every vertex is contained in exactly $k$ distinct edges.

\begin{Def} The \textit{edge boundary} of a set $S\subseteq V$, denoted $\partial S$ is defined as $\partial S=E(S,S^c)$.\\
A $k$-regular graph $\Gamma=(V,E)$ is called a $c$-\textit{expander graph} if for all $S\subset V$ with $|S|\leq |V|/2$, $|\partial S|\geq c|S|$.
\end{Def}
Pinsker \cite{Pi} (see also \cite{SLW}, sec. 2.2) has shown that there is a $c>0$ such that for any $n$ large enough there is a 3-regular $c$-expander graph with $n$-vertices. 

In the course of the proof that follows we will need to show several inequalities; as obtaining best
constants is irrelevant for our construction we will always be using inequalities that are easy to state and verify, rather than optimal ones.
Although we could use any family of $k$-regular expander graphs it is somewhat easier to describe our examples using $3$-regular expander graphs.

Consider a 3-regular $c$-expander graph $\Gamma _n$ with $n^2+n$ vertices. We give a way to 
replace this graph by a Riemannian surface. 
 For each vertex $v$ we pick a Euclidean 3-sphere $S_v$ of radius $1/n$. Recall that the area of this
sphere is $4\pi (1/n)^2$. If $l$ is a great circle of $S_v$
we pick 3 equidistant points $e_1,e_2,e_3$ on $l$ and we consider 3 spherical caps $D_v^1,D_v^2,D_v^3$ on $S_v$ with centers $e_1,e_2,e_3$
and heights equal to $\dfrac {1}{10n}$. Clearly these spherical caps are disjoint and it is easy to see that the distance between any two of them on the sphere is greater than $1/n$.

Indeed if $D_v^i$ intersects $l$ at $x_i,x_i'$ and $\theta= \angle \, x_iOe_i$ (where $O$ is the center of $S_v$) then $\cos\theta=9/10>\cos (\pi/6)$
so the angle $  \angle \, x_iOx_j$ is greater than $\pi /3$ hence $d(x_i,x_j)>1/n$ on the sphere (for $i\ne j$).

Let $C_v^1,C_v^2,C_v^3$ be the boundary curves of $D_v^1,D_v^2,D_v^3$ respectively. We note that $$\length(C_v^i)\geq 1/n.$$

We remove the open spherical caps $D_v^1,D_v^2,D_v^3$ from $S_v$
and we still denote this sphere with holes by $S_v$ to keep notation simple.
We note that $$\area (S_v)=\dfrac{4\pi}{n^2}-3\dfrac{2\pi}{10n^2}=\dfrac{17\pi}{5n^2}.$$
We set $a=\dfrac{17\pi}{5}$, so $\area (S_v)=a/n^2$.

Now to each edge $E_i$ ($i=1,2,3$) in $\Gamma $ leaving $v$ we associate the boundary curve $C_v^i$. If an edge $e$ joins the vertices $v,w$ of $\Gamma $ we identify the
corresponding boundary curves of the spheres with holes $S_v,S_w$ by an isometry. This gluing produces a closed surface $S_n$ with
$$\area (S_n)=\dfrac{a(n^2+n)}{n^2}=a+\dfrac{a}{n}.$$
We set $\tau _n=\dfrac{a}{n}$.

We note that $S_n$ is not a smooth manifold but it is easy to slightly modify the metric to make it smooth and this modification with have a negligible effect on the 
following calculations, so we will pretend for the moment that $S_n$ is a smooth manifold.
We set $b=\min (\dfrac{1}{10},\dfrac{c}{4})$.
\begin{Prop}
Let $\Omega $ be an open set with smooth boundary in $S_n$ with $\area (\Omega)=\tau _n$. Then $$\length (\partial\Omega )\geq b.$$

\end{Prop}

\begin{proof}

We distinguish two types of spheres with holes $S_v$.

\textit{Type 1:}  $\length (\Omega \cap (C_v^1\cup C_v^2\cup C_v^3))\geq \dfrac{5}{2}\length (C_v^1)$ and

\textit{Type 2:} where the opposite inequality holds.

Let's denote by $\Omega _1$ the intersection of $\Omega $ with all type 1 spheres and $\Omega _2$ its intersection with type 2 spheres.

\begin{Lem}
Let $S_v$ be a type 2 sphere and let $\Omega _v=\Omega \cap S_v$ and $\delta \Omega _v=\partial \Omega\cap S_v$ . We have then

$$\area (\Omega _v)\leq \dfrac {8\pi}{n}\length (\delta \Omega _v)  .$$

\end{Lem}

\begin{proof} 

Note that $\area \Omega _v\leq \area S_v=a/n^2$. We note also that $\delta \Omega _v$ is in fact the boundary of $\Omega _v$ if we consider it to be
a subset of $S_v$ rather than $S$-for consistency's sake and to avoid ambiguity we will always consider our domains to be subsets of $S$.

We remark that our definition ensures that $\delta \Omega _v\subseteq \bd \Omega $ which
is convenient in what follows. 

Clearly it suffices to prove the inequality for all connected components $U$ of $\Omega _v$, namely it suffices to show:
$$\area (U)\leq \dfrac {8\pi}{n}\length (\delta \Omega _v\cap \bd U).\ \  \ \ \  \ \  (*) $$

We consider the connected components of $\delta \Omega _v$. If there is a connected component $\alpha $ that is an arc with endpoints in
two distinct $C_v^i$, $C_v^j$ then $\length (\alpha )\geq 1/n$ so the lemma holds as $$\dfrac{a}{n^2}\leq \dfrac{8\pi}{n}\dfrac{1}{n}.$$ 

Similarly if there is a connected component $\alpha $ that is a simple closed curve such that for some $i\ne j$ $C_v^i,C_v^j$ lie in the closures of 
distinct connected components of $S_v\setminus \alpha $ then $\length (\alpha )\geq 1/n$ and the lemma holds. So we may assume that $\delta \Omega _v$
contains no such simple closed curve $\alpha $.

Note also that if there is a connected component of $\delta \Omega _v$, $\alpha $, which is a simple closed curve that bounds a disc $D$ in $S_v$ which is not contained entirely in $\Omega _v$ then
we may delete $\alpha $. This will increase $\area (\Omega _v)$ and decrease $\length (\delta \Omega _v)$. So it suffices to prove the lemma for regions $\Omega _v$
such that $\delta \Omega _v$ contains no such simple closed curves.

Similarly suppose that there is a connected component $U$ of $\Omega _v$ such that $\bd U\cap \delta \Omega _v$ contains an arc $\alpha $ with its endpoints on some $C_v^i$ so that if $\beta $ is the arc with the same endpoints on $C_v^i$, $\alpha \cup \beta $ bounds a disc on $S_v$ which contains $U$. In this case if we erase all arcs in $\bd U\cap \delta \Omega _v$ except $\alpha $
the area of $U$ increases while the boundary length decreases (see figure 1). So we may assume for the purposes of showing $(*)$ that if a region $U$ intersects exactly one of the $C_v^i$ then
$\bd U\cap \delta \Omega _v$ is a single arc.


\begin{figure}[htbp]
\hspace*{-3.3cm}     
\begin{center}
                                                      
   \includegraphics[scale=0.500]{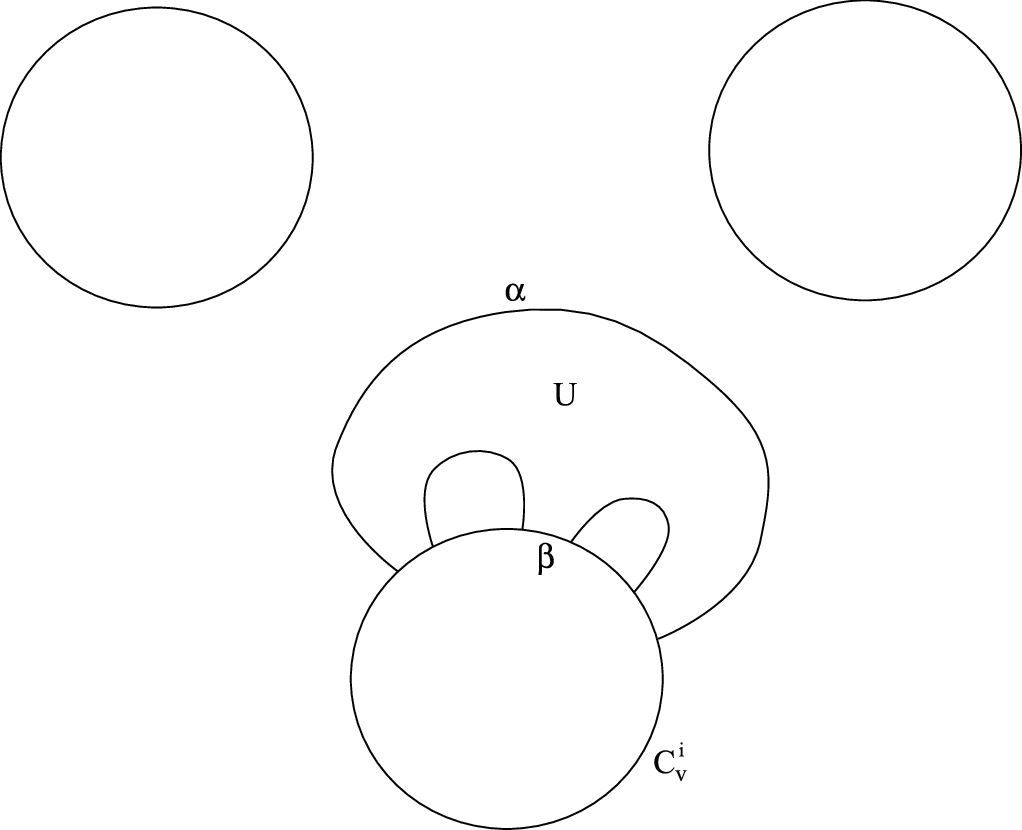}%
   \end{center}

\caption{Erasing the `inner' arcs.}
  \label{}
\end{figure}


To summarize if a connected component $U$ of $\Omega _v$ has a simple closed curve $\alpha $ in its boundary then we may assume that $U$ is a disc with boundary $\alpha $.
Also if some connected component $U$ of $\Omega _v$ intersects only one $C_v^i$ then we may assume (for the purpose of showing $(*))$ that $\bd U \cap \delta \Omega _v$ is a single arc. 
Finally if some connected component $U$ of $\Omega _v$ intersects two $C_v^i$'s then it intersects all $C_v^1, C_v^2, C_v^3$.

From these observations we conclude that it is enough to show $(*)$ assuming that the connected components $U$ of $\Omega _v$ fall in the following 3 cases (see figure 2):

\begin{figure}[htbp]
\hspace*{-3.3cm}     
\begin{center}
                                                      
   \includegraphics[scale=0.500]{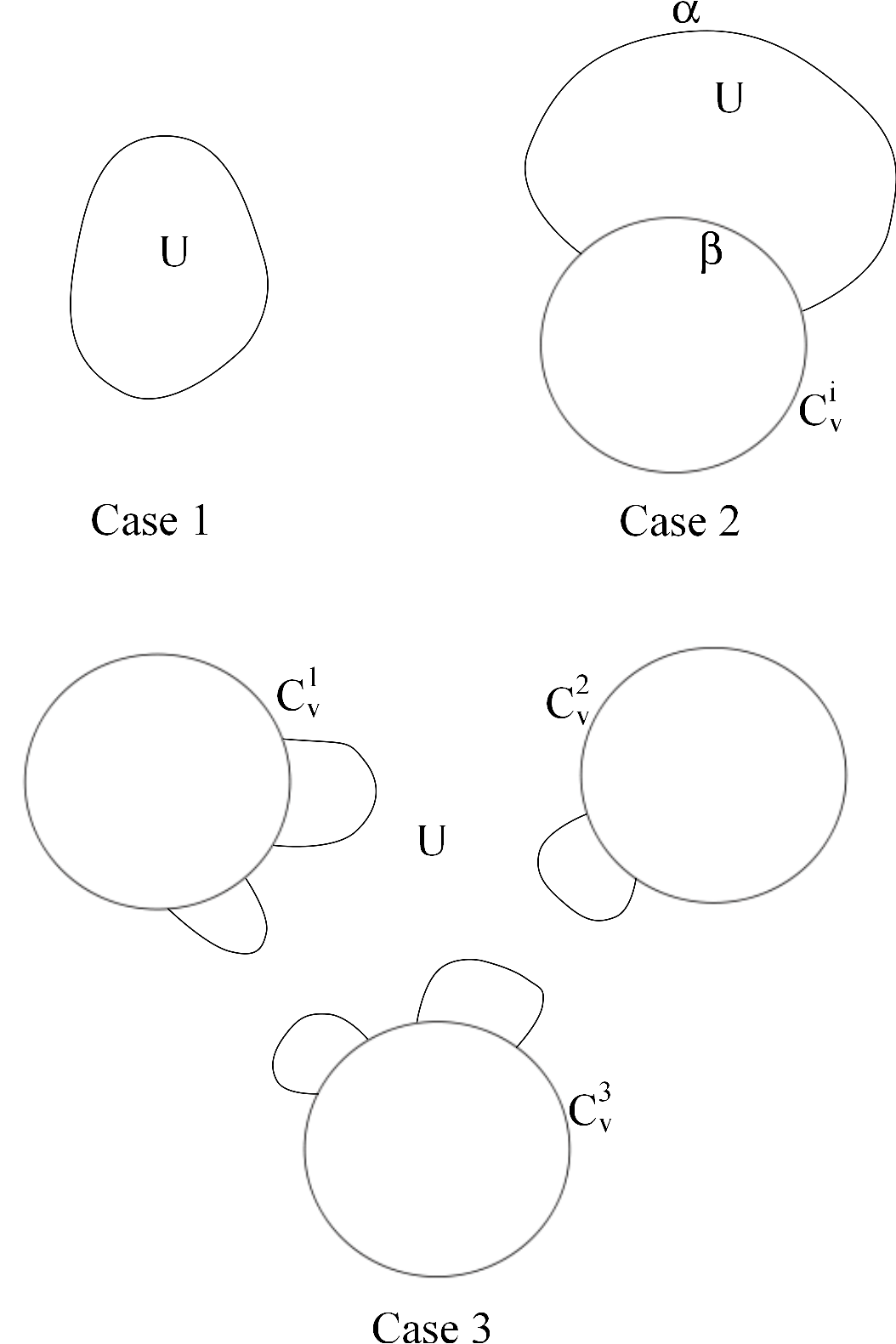}%
   \end{center}

\caption{}
  \label{}
\end{figure}

1. Regions $U$ homeomorphic to a disc with $\bd U\subseteq \delta \Omega _v$. By the isoperimetric inequality of the sphere the minimal boundary length for such regions
is for spherical caps. However the corresponding spherical cap has area less than the sphere with holes $S_v$ so it has
height $h$ less than $\dfrac {19}{10n}$. Recall that the area of a spherical cap is $2\pi r h$ where $r$ is the radius of the sphere and $h$ the height.  It follows that the length of the boundary of the spherical cap
is greater or equal to $h/2$ so

$$\area U\leq \dfrac{2\pi h}{n}\leq \dfrac{4\pi \length (\bd U)}{n}$$

 and the inequality holds.

2. Regions $U$ homeomorphic to a disc with $\bd U=\alpha \cup \beta$ where $\alpha ,\beta$ are arcs with the same endpoints and
$\alpha \subseteq \delta \Omega _v,\beta \subseteq C_v^i$ for some $i$. Then clearly $\length (\alpha )> \length (\beta )$
and applying the isoperimetric inequality of the sphere again we have

$$\area (U)\leq \dfrac{8\pi \length (\bd U)}{n} $$ 
and the inequality holds.

3. Regions $U$ homeomorphic to a sphere with 3 holes and $\bd U=\alpha \cup \beta$ where $\alpha $ is a union of arcs in $\delta \Omega _v$ with endpoints
on the $C_v^i$ 's and $\beta $ is a union of arcs contained in the $C_v^i$ 's. Since $S_v$ is of type 2 $\length (\alpha )\geq \dfrac{1}{2n} $
while $$\area (U)\leq \area (S_v)=\dfrac{a}{n^2}$$ and the inequality holds in this case too.


%


\end{proof}

We note that at least one of $\area (\Omega _1), \area (\Omega _2)$ is greater than or equal to $\tau _n/2$. We will show that in both cases the proposition holds.

Let's say that $\area (\Omega _2)\geq \tau _n/2$. If $a_v$ is the area of $S_v\cap \Omega _1$ and $\ell _v$ is the length of
$\bd \Omega _1\cap S_v$ then
$$\dfrac {\tau _n}{2}\leq \sum a_v\leq \dfrac{8\pi}{n} \sum \ell _v $$ so
$$\length (\partial\Omega )\geq \sum \ell _v > \dfrac{1}{10}\geq b .$$

Let's say now that $\area (\Omega _1)\geq \tau _n/2$. Let $T$ be the set of vertices of $\Gamma _n$ that correspond to spheres with holes of type 1.
Clearly $T$ contains at least $n/2$ vertices. 

\begin{Cl} $T$ contains less than $\dfrac {n^2+n}{2}$ vertices. 
\end{Cl}

\begin{proof}

Indeed if $T$ contains more than $\dfrac {n^2+n}{2}$
vertices then there are at least $n^2/10$ vertices $v$ of type 1 such that $\area (S_v\cap \Omega _1)<\area (S_v)/2$, otherwise we would have $\area (\Omega _1)> \tau _n$ which is impossible. We show now that for each sphere $S_v$ in $T$ for which $\area (S_v\cap \Omega _1)<\area (S_v)/2$
we have that $\length (\bd \Omega _1\cap S_v)\geq 1/n$. Indeed if $\bd \Omega _1\cap S_v$ contains an arc with endpoints in distinct $C_v^i$'s this is obvious. We note also that
none of the $C_v^i$'s is entirely contained in the complement of $\Omega _1$ as $S_v$ is of type 1. So $\bd \Omega _1\cap S_v$ consists of some arcs $\alpha _j$ $(j\in J)$ such that both endpoints
of the $\alpha _j$ lie in a single $C_v^i$. If we denote by $\beta _j$ the shorter of the two arcs on $C_v^i$ with the same endpoints as $\alpha _j$ then we note that $\alpha _j\cup \beta _j$ bounds a region of $S_v$ homeomorphic to a disc. If this is not
contained in some hemisphere then $\length (\alpha _j)\geq 1/n$ so $\length (\bd \Omega _1\cap S_v)\geq 1/n$. It follows by the
soperimetric inequality of the sphere that
$\length (\alpha _j)+\length (\beta _j)$ is greater than the boundary length of the spherical cap bounding the same area.
We note that
the union
$$\bigcup _{j\in J} (\alpha _j\cup \beta _j)$$ bounds a region of area greater than $\area (S_v)/2$. It follows  that 
$$\sum _{j\in J} (\length (\alpha _j)+\length (\beta _j))\geq 2\length (C_v^1).$$

Since $S_v$ is of type 1 $$\sum _{j\in J}\length (\beta_j)\leq \length (C_v^1)/2.$$
Hence $$\sum _{j\in J}\length (\alpha _j)\geq \length (C_v^1) \geq 1/n$$

So
$$\length (\bd (\Omega _1))\geq n/10.$$ This proves the claim, from which the proposition clearly follows.

\end{proof}

By our hypothesis that $\Gamma _n$ is a sequence of $c$-expander graphs there are at least $\dfrac {cn}{2}$ edges in the boundary
of $T$. 
If $(v,w)$ is such an edge, with $v\in T$, then for some $i\in \{1,2,3\}$
$C_v^i$ is identified to $C_w^j$ for some $j\in \{1,2,3\}$ and $S_v$ is of type 1 while $S_w$ is of type 2. Since $S_v$ is of type 1 
$$\length (C_v^i\cap \Omega)\geq \dfrac {1}{2}\length C_v^i$$  
so as $C_v^i$ is identified to $C_w^j$ $$\length (C_w^j\cap \Omega)\geq \length (C_w^j)/2.$$
On the other hand since $S_w$ is of type 2 $$\sum _{i=1}^3\length (C_w^i\setminus \Omega)\geq \length (C_w^j)/2.$$
If we set $$A=C_w^j\cap \Omega $$ and $B=(\bigcup _{i=1}^3C_w^i)\setminus \Omega $ we have that $\bd \Omega \cap S_w$ separates $A$ from $B$.

We claim now that $\bd \Omega \cap S_w$ has length at least $1/2n$.

If $\bd \Omega \cap S_w$ contains some simple closed curve $\alpha $ such that each region of $S_w\setminus \alpha $ contains some $C_w^i$ then clearly $\bd \Omega \cap S_w$ has length
more than $1/2n$. The same holds if $\bd \Omega \cap S_w$ contains some arc with endpoints in distinct $C_w^i$'s so in these cases the claim is proven.

Otherwise since $\bd \Omega $ separates $A,B$ at least one of the following two holds:

1. There is a set of arcs $\beta_j$ ($j\in J$) on $\bd \Omega $ with endpoints on $C_w^j$  such that there are arcs $\alpha _j$ on $C_v^j$ where $\alpha _j$ has the same endpoints as $\beta _j$,
$$A\subseteq \bigcup _{j\in J} \alpha _j$$ and for each $j\in J$ $\alpha _j\cup \beta _j$ bounds a disc in $S_w$.

2. There is a set of arcs $\beta_j$ ($j\in J$) on $\bd \Omega $ with endpoints on $\bigcup _{i=1}^3 C_w^i$  such that there are arcs $\alpha _j$ on $\bigcup _{i=1}^3 C_w^i$ where $\alpha _j$ has the same endpoints as $\beta _j$,
$$B\subseteq \bigcup _{j\in J} \alpha _j$$ and for each $j\in J$ $\alpha _j\cup \beta _j$ bounds a disc in $S_w$.

In both cases $$\sum _{j\in J} \length \beta _j\geq \sum _{j\in J} \length \alpha _j\geq \min (\length A,\length B)\geq 1/2n.$$
 It follows that in all cases
$\bd \Omega \cap S_w$ has length at least 
$1/2n$. Hence $$\length (\bd \Omega)\geq  \dfrac {c}{4}\geq b$$ from which the proposition follows.

\end{proof}

\section{A sphere hard to cut}

\begin{Def} Let $B$ be a Riemannian 3-ball. If $F\subset B$ is a smoothly embedded orientable surface with boundary
we say that $F$ \textit{separates} $B$ if $ F\cap \partial B=\partial F$. 

If $F$ is a surface separating a Riemannian 3-ball $B$ we say that $F$ \textit{cuts an $\epsilon $-piece} of $B$
if $B-F$ can be written as a union of two disjoint open sets $U,V$ both of which have volume greater than $\epsilon $.

We define similarly what it means for a closed surface to cut an $\epsilon $-piece of a Riemannian 3-sphere.

\end{Def}
%
%

Consider a 3-regular $c$-expander graph $\Gamma _n$ with $n^3$ vertices. We give a way to `thicken' this graph, i.e.
replace it by a Riemannian handlebody. For each vertex we pick a Euclidean 3-ball $B_v$ of radius $1/n$. Recall that the volume of this
ball is $4\pi/3\cdot (1/n)^3$. Let $S_v$ be the boundary sphere of $B_v$. If $l$ is an equator   $S_v$
we pick 3 equidistant points $e_1,e_2,e_3$ on $l$ and we consider 3 disjoint (spherical) discs on $S_v$ with centers $e_1,e_2,e_3$
and radii equal to $1/n$. Clearly these discs are disjoint. Now to each edge $E_i$ in $\Gamma $ leaving $v$ we associate the disc
with center $e_i$. If an edge $e$ joins the vertices $v,w$ of $\Gamma $ we identify the discs of the balls $B_v,B_w$ corresponding to this edge.

In this way we obtain a handlebody $\Sigma _n$. Note that $\partial \Sigma _n\cap B_v$ is a pair of pants. We will refer to $B_v$ later
on as a filled in pair of pants and we will call the discs with centers $e_1,e_2,e_3$ on $S_v$ the holes of this pair of pants.
We note that the area of $S_v$ is $4\pi (1/n)^2$ and the area of $S_v$ minus the 3 spherical discs is 
$$4\pi (1/n)^2-6\pi (1/n)^2 (1-\sin 0.5)=\pi(1/n)^2(6\sin 0.5-2).$$

By changing the metric of $\Sigma _n$ slightly we get a smooth handlebody, denoted still by $\Sigma _n$, of volume $4\pi/3$. Finally by gluing appropriately thickened
discs to this handlebody we obtain a ball $B_n$. We may assume that this gluing operation changes
the volume of $B_n$ and the area of its boundary by a negligible amount. 
 We may pick a simple curve $\gamma $ on $\partial B_n$ such that every point of $\partial B_n$ is at distance at most $1/n$ from
$\gamma $. By gluing a thickened disk of diameter $1/n$ and negligible volume to $\partial B_n$ along $\gamma $ we obtain a new ball of arbitrarily small diameter.
We still denote this 3-ball by $B_n$. In fact it follows also directly by the properties of expander graphs that the diameter of $B_n$
is bounded.

We double $B_n$ along its boundary
to obtain a 3-sphere. By changing the metric slightly along the doubling locus we may ensure that we obtain a smooth sphere $S_n$ of volume
$8\pi /3$.

%
%
%
%
\begin{Thm} \label{ball} Given $\epsilon, M>0$ there is some $n$ such that any surface that cuts an 
$\epsilon $-piece of $B_n$ (or $S_n$) has area greater than $M$.

\end{Thm}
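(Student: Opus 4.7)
The plan is to exploit the expander structure of $\Gamma_n$: a surface $F$ of bounded area cannot push through enough of the disc identifications of the handlebody to balance the two volumes. Suppose $F$ separates $B_n$ (or $S_n$) into open sets $U,V$ with $\mathrm{vol}(U),\mathrm{vol}(V)\ge\epsilon$. For each vertex $v$ of $\Gamma_n$, set $u_v=\mathrm{vol}(B_v\cap U)/V_v\in[0,1]$, where $V_v=4\pi/(3n^3)$ is the volume of the pair of pants $B_v$. Fix a small threshold $\alpha=\alpha(\epsilon)>0$ (of order $\epsilon$, calibrated below), and classify vertices as $V_U=\{v:u_v>1-\alpha\}$, $V_V=\{v:u_v<\alpha\}$, and $V_F=\{v:\alpha\le u_v\le 1-\alpha\}$.

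First, each $B_v$ is a rescaled Euclidean $3$-ball, so the relative isoperimetric inequality forces $\mathrm{area}(F\cap B_v)\ge c_1\,\alpha^{2/3}/n^2$ for every $v\in V_F$; summing over the essentially disjoint $B_v$'s yields $|V_F|\le C_1\,n^2\,\mathrm{area}(F)/\alpha^{2/3}$. Next, from $\mathrm{vol}(U)\ge\epsilon$ we have $\sum_v u_v\ge 3\epsilon n^3/(4\pi)$, and a termwise bound against the thresholds gives $|V_U|\ge 3\epsilon n^3/(4\pi)-\alpha n^3-|V_F|$. Taking $\alpha=3\epsilon/(16\pi)$ and absorbing the $|V_F|$ term using the previous bound (negligible when $\mathrm{area}(F)\le M$ and $n$ is large) yields $|V_U|\ge c_2\epsilon n^3$; symmetrically $|V_V|\ge c_2\epsilon n^3$. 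In particular one of $V_U,V_V$ has size at most $n^3/2$, say $|V_U|$, and the expander inequality gives $|\partial V_U|\ge c|V_U|$.

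To convert this graph cut into an area lower bound, fix an edge $(v,w)\in E(V_U,V_V)$ and consider the region $R_{vw}=B_v\cup B_w$, two rescaled Euclidean balls glued along the disc $D_{vw}$. Both $U$- and $V$-fractions of $R_{vw}$ lie in $[(1-\alpha)/2,(1+\alpha)/2]$, so by the relative isoperimetric inequality on the rescaled model $R_{vw}$ — a fixed region independent of $n$ — we have $\mathrm{area}(F\cap R_{vw})\ge c_3/n^2$. Since $\Gamma_n$ is $3$-regular, each $B_v$ appears in at most three such $R_{vw}$, so $3\,\mathrm{area}(F)\ge|E(V_U,V_V)|\,c_3/n^2$. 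Combining $|\partial V_U|\ge c|V_U|$ with the trivial bound $|E(V_U,V_F)|\le 3|V_F|$ yields $|E(V_U,V_V)|\ge c|V_U|-3|V_F|$. Plugging in the estimates from the previous paragraph produces $\mathrm{area}(F)\,(1+O_\epsilon(1))\ge c'(\epsilon)\,n$, so $\mathrm{area}(F)\ge c(\epsilon)\,n$, which exceeds $M$ once $n$ is large enough. The sphere case $S_n$ is identical since $S_n$ is the metric double of $B_n$ and inherits the same handlebody structure.

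The main obstacle is establishing the relative isoperimetric inequality on $R_{vw}$ with a constant $c_3$ independent of $n$. This reduces, after rescaling by $n$, to the fixed model of two Euclidean unit balls glued along a disc of fixed radius, where the estimate follows from the classical Euclidean isoperimetric inequality applied within each ball together with the observation that a surface separating $R_{vw}$ into two pieces of comparable volume must either substantially cut one of the balls or traverse the glued disc, incurring area there. A secondary subtlety is the joint calibration of $\alpha$: small enough that $|V_F|$ is negligible beside $|V_U|$, yet not so small that $\alpha^{2/3}$ in the $|V_F|$ bound degenerates; both constraints are met by taking $\alpha$ proportional to $\epsilon$.
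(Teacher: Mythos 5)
Your proposal follows the same global strategy as the paper — partition the pairs of pants by how they interact with $U$ and $V$, show the ``mixed'' vertices are few when $\mathrm{area}(F)$ is bounded, and then invoke the expander inequality on the large set of mostly-$U$ vertices to force many edges across which $F$ must pay $\gtrsim n^{-2}$ in area — but the local step at the crossing edges is genuinely different. You apply a relative isoperimetric inequality to the glued region $R_{vw}=B_v\cup_{D_{vw}}B_w$, reducing to a fixed rescaled model. The paper instead works with a single ball $B_v$ adjacent to a boundary hole of the set $A_2$: it uses inequality $(*)$ (that $U_1$ occupies area $>1/n^2$ of the shared hole), the coarea formula along concentric spheres $C_r$, and a Lipschitz radial projection to $S_v$ to show $\mathrm{area}(F\cap B_v)\ge\epsilon/n^2$. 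The paper's route has the advantage of only ever invoking the isoperimetric inequality inside a round Euclidean ball (a classical fact, cited to Ros), at the cost of a more delicate pointwise/coarea argument; your route is more modular but requires establishing the relative isoperimetric inequality with a uniform constant on the nonstandard domain $R_{vw}$.

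On that last point, you correctly identify it as the main obstacle, but the justification you sketch is not quite right as stated: ``a surface \ldots must \ldots traverse the glued disc, incurring area there'' is problematic because $F$ and $D_{vw}$ are both $2$-dimensional, so their intersection is generically a curve and carries no area. The correct reason the inequality holds is softer: after rescaling by $n$, $R_{vw}$ is a fixed smooth compact Riemannian $3$-ball (once one smooths the dihedral crease along $\partial D_{vw}$, exactly as the paper smooths $\Sigma_n$), and every such manifold has a positive relative isoperimetric (Cheeger) constant; alternatively one can exhibit a bi-Lipschitz map to the round ball. Either way the constant $c_3$ is independent of $n$, so the gap is real but readily fillable. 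With that supplied, your bookkeeping — the tri-partition with threshold $\alpha\asymp\epsilon$, the bound $|V_F|=O_\epsilon(n^2\,\mathrm{area}(F))$, the expander bound $|E(V_U,V_V)|\ge c|V_U|-3|V_F|$, and the $3$-regularity overcount — assembles into the same $\mathrm{area}(F)\gtrsim_\epsilon n$ conclusion as the paper, and the reduction from $S_n$ to $B_n$ matches the paper's closing remark.
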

\begin{proof}
We may (and will) assume that $\epsilon <1/100$.
Let $F$ be a (not necessarily connected) surface cutting an $\epsilon $-piece of $B_n$. So $B_n-F=U_1\cup U_2$ with $U_1,U_2$ open 
of volume greater than $\epsilon $.  We denote
by $Q_1,Q_2$ the closures of $U_1,U_2$ respectively. Without loss of generality we assume that $vol(U_2)\geq vol(U_1)$.

We note that $B_n$ contains a handlebody $\Sigma _n$ which is a union of filled in pairs of pants $B_v$-one for each vertex of the graph $\Gamma _n$.
Clearly $S_v\cap \partial \Sigma _n$ is a pair of pants with 3 holes.

Let $B_v$ be one such (filled in) pair of pants. Its volume is  $4\pi/3 n^3$.  By the solution of the isoperimetric problem for a ball
(\cite {Ro}) if a surface
cuts an $\epsilon 4\pi/ 3n^3$ piece of $B_v$ then its area is greater than $(4\pi\epsilon / 3n^3)^{2/3}\geq \epsilon/ n^2 $.

Let's say that for $n_1$ filled in pairs of pants $F$ cuts an $\epsilon / n^3$ piece and that for $n_2$ filled in pairs of pants
more than $\dfrac {4\pi(1-\epsilon)}{3n^3} $ of their volume is contained in $U_1$. Since $vol (U_1)\leq vol (U_2)$ 
$$n_2\leq 2\epsilon n^3\leq n^3/2$$

We distinguish two cases.

\textit{Case 1}. $n_1\geq \epsilon n^3/2$. Since the area of intersection of $F$ with each one of these $n_1$ filled in pairs of pants is
greater than $\epsilon/ n^2 $ the area of $F$ is greater than $\epsilon ^2 n/2$ which clearly tends to infinity as $n\to \infty $.

\textit{Case 2}. $n_1< \epsilon n^3/2$. Since $vol (U_1)>\epsilon $ we have that $n_2\geq \epsilon n^3/2$.
Let's denote this set of $n_2$-filled pairs of pants by $A$. Let $B_v$ be in $A$, and let $U_v=B_v\cap U_1$. Since 

$$vol (U_v)\geq \dfrac {4\pi(1-\epsilon)}{3n^3} $$ by the Euclidean isoperimetric inequality the boundary of $U_v$ has area at least
$$ \dfrac{4\pi (1-\epsilon)^{2/3}}{n^2}.$$

Since $\epsilon <1/100$ it follows that if the area of
$F\cap B_v$ is less than $\epsilon /2n^2$ then $U_1$ intersects non-trivially all
3 holes of the filled-in pair of pants. 


In fact since the area of a spherical cap is given
by $2\pi rh$ where $r$ is the radius and $h$ the height, the area of the intersection of $U_1$ with a hole is greater than
$$\dfrac {2\pi}{4n^2}>\dfrac {1}{n^2} \ \ \ \ \ \ \ \ \ \ \ \ \ \ \ \ \ \  (*).$$

Let's denote by $A_1$ the set of filled-in pair of pants in $A$ for which the area of intersection of $F\cap B_v$ is more than
$\epsilon /2n^2$ and let $A_2=A-A_1$. We set $k_1=|A_1|,\, k_2=|A_2|$ and note that $$k_1+k_2 = n_2 \geq \frac {\epsilon n^3} {2}.$$ If $k_1\geq \epsilon n^3/4$ then we see that
the area of $F$ is greater than $\epsilon ^2 n/8$ which clearly tends to infinity as $n\to \infty $. Otherwise $k_2\geq \epsilon n^3/4$.
By the expander property (and since $k_2\leq n^3/2$) the (not necessarily connected) union of filled in pairs of pants in $A_2$, $\Sigma $, has a boundary that consists of at least
$$c k_2 \geq \dfrac {c \epsilon n^3} {4}$$ holes. Let $B_v$ be a filled-in pair of pants adjacent to one of these holes. Clearly $B_v$ intersects $U_1$.
We claim that the area of $B_v\cap F$ is at least $\epsilon/ n^2 $. This is clear if $F$ cuts an $4\pi\epsilon / 3n^3$ piece of $B_v$
or if $B_v$ lies in $A_1$. If this is not the case then more than $(1-\epsilon)4\pi/n^3 $ of the volume of $B_v$ is contained in $U_2$. 
Let $O_v$ be the center of $B_v$.
Let's denote by $C_r$ the sphere with radius $r$ and center $O_v$. Let $l_r$ be the length of the intersection
of $F$ with $C_r$. If $l_r>1/10n$ for all $r$ with $1/n>r>9/10n$ then by the co-area formula (\cite {Fe}, 3.2.22) the area of $F\cap B_v$
is greater than $1/100n^2>\epsilon /n^2$. Otherwise we consider an $r_0\in (9/10n,1/n)$ for which $l_{r_0}$ is smaller than $1/10n$. We consider
the portion $F_1$ of $F$ between $C_{r_0}$ and the boundary of $B_v$ and we fill the holes of $F_1$ lying on $C_{r_0}$ by minimal area discs.
The total area of these disks is smaller than $\dfrac {\pi}{100n^2}$.
Let's call the surface obtained this way by $F_2$. Note that $F_2$ separates $U_1\cap B_v$ from $O_v$. Let $f$ be the radial projection
from $O_v$ to $C_1=S_v$. Clearly $f(F_2)$ contains $S_v\cap U_1$ and by inequality (*) the area of $S_v\cap U_1$ is greater than $\dfrac {1}{n^2}$ . Also $f$ is Lipschitz with Lipschitz constant less than $2$.
So the area of $f(F_2-F_1)$ is less than  $\dfrac {\pi}{50n^2}$. It follows that the area of $F_1$ is greater than 
$$\dfrac {1}{4n^2}$$ so the area of $F\cap B_v$ is greater than $\epsilon /n^2$ in this case too.

It follows as before that the area of $F$ is at least

$$\dfrac {c \epsilon n^3} {4}\cdot \dfrac{\epsilon}{ n^2}=\dfrac {c \epsilon ^2 n} {4} $$
which clearly tends to infinity as $n\to \infty $.

The result for the 3-sphere $S_n$ follows immediately from $B_n$ as $S_n$ is a union of two copies of $B_n$ and if
a surface cuts an $\epsilon $-piece of $S_n$ is cuts an $\epsilon /2$ piece in one of these two copies of $B_n$.
Finally clearly we may normalize the volume of $S_n$, $B_n$ to 1.

\end{proof} 

\begin{Rem} In \cite{GAZ} it is assumed additionally that the surface area and the diameter of the ball $B_n$ is bounded.
However both these properties are easy to arrange. As for the surface area one may excise a small ball from the 3-sphere $S_n$ in the
proof above and obtain a ball $B$ such that the area of $\partial B$ is arbitrarily small. By construction $B_n, S_n$ have diameter less than 1.
In fact given any ball (in any dimension $\geq 3$) one can easily decrease its diameter by surgery: one may cut out a thickened simple curve
and glue back in a ball with small diameter. This has no effect on the volume- or separation properties of the ball.
Even though we stated our result only for dimension 3 the same construction applies for spheres (balls) of any dimension $n\geq 3$.
\end{Rem}

\section{Discussion and further questions}

A general question is what geometric properties might prevent a sequence of $k$-manifolds $M_n^k$ from being an expander.
For example it is easy to see that if $M_n^k$ have Ricci curvature bounded from below then  $M_n^k$ can not be an expander sequence.
Glyn-Adey and Zhu showed (\cite {GAZ}, thm 1.6) that $M_n^3$ is not an expander sequence if the diameters and the homological filling function of the manifolds $M_n^3$ are bounded.
It is not clear however whether the bound on the diameter is necessary. We recall that the 1st homological filling function of a manifold is defined by
$HF_1(\ell )$ is defined to be the supremum of all the filling areas of cycles of length $\ell $. One may ask:

\begin{Qu} Let $M_n^3$ be a sequence of 3-manifolds homeomorphic to the 3-sphere such that for all $M_n^3$ $HF_1(\ell )\leq c\ell ^2$ for a fixed $c>0$.
Is it true that $M_n^3$ is not an expander sequence?
\end{Qu}

It is clear that a sequence of expander manifolds can not `converge' (in the Gromov-Hausdorff sense) to a manifold. Could one make precise how these
manifolds `collapse'? 
%

Even though a lower bound on Ricci curvature rules out expanders, interestingly the question whether such manifolds have continuous isoperimeric profile is still
open (see \cite{NP}, question 3 and \cite {FN}).

\end{document}
\bye